\newcommand{\E}{\mathbb{E}}
\renewcommand{\P}{\mathbb{P}}
\newcommand{\R}{\mathbb{R}}
\newcommand{\bx}{\mathbf{x}}
\newcommand{\bt}{\mathbf{t}}
\newcommand{\by}{\mathbf{y}}
\newcommand{\be}{\mathbf{\eta}}
\newcommand{\diag}{\mathop{\mathrm{diag}}\nolimits}
\newtheorem{theorem}{Theorem}[section]
\newtheorem{lemma}[theorem]{Lemma}
\newtheorem{corollary}[theorem]{Corollary}
\newtheorem{proposition}[theorem]{Proposition}
\title{Correlation functions of real zeros of random polynomials}
\author[F.~G\"otze]{Friedrich G\"otze}
\address{Friedrich G\"otze, Faculty of Mathematics,
Bielefeld University,
P. O. Box 10 01 31,
33501 Bielefeld, Germany}
\email{goetze@math.uni-bielefeld.de}
\author[D.~Kaliada]{Dzianis Kaliada}
\address{Dzianis Kaliada, Institute of Mathematics, National Academy of Sciences of Belarus, 220072 Minsk, Belarus}
\email{koledad@rambler.ru}
\author[D.~Zaporozhets]{Dmitry Zaporozhets}
\address{Dmitry Zaporozhets\\
St.\ Petersburg Department of
Steklov Institute of Mathematics,
Fontanka~27,
 191011 St.\ Petersburg,
Russia}
\email{zap1979@gmail.com}
\keywords{Random polynomial, correlation between zeros, joint intensities, Coarea formula}
\subjclass[2010]{60G55 (primary), 26C10, 26B20, 60B99 (secondary).}
\thanks{Supported by CRC 701, Bielefeld University (Germany).}
\begin{document}

\begin{abstract}
We give an explicit formula for the correlation functions of real zeros of a random polynomial with arbitrary independent continuously distributed coefficients.
\end{abstract}

\maketitle

\section{Introduction}
Let $\xi_0,\xi_1,\dots,\xi_{n}$ be independent random variables with probability density functions $f_0,\dots,f_n$. Consider a random polynomial
\[
G(x)=\xi_nx^n+\xi_{n-1}x^{n-1}+\dots+\xi_1x+\xi_0,\quad x\in\R^1.
\]
With probability one, all zeros of $G$ are simple. Denote by $\mu$ the empirical measure counting the real zeros of $G$:
$$
\mu=\sum_{x:G(x)=0}\delta_x,
$$
where $\delta_x$ is the unit point mass at $x$. The distribution of $\mu$ can be described by its \emph{correlation functions} (also known as \emph{joint intensities}). Recall (see, e.g.,~\cite{HKPV09}) that the correlation functions of $\mu$ are functions (if well-defined) $\rho_k\,:\,\R^k\to\R^+$ for $k=1,\dots,n$, such that for any family of mutually disjoint Borel subsets $B_1,\dots,B_k\subset\R^1$,
$$
\E\,\left[\prod_{i=1}^{k}\mu(B_i)\right]=\int_{B_1}\dots\int_{B_k}\,\rho_k(x_1,\dots,x_k)\,dx_1\dots dx_k.
$$
A standard tool for evaluating $\rho_k$ is the following extension of the Kac-Rice formula (see~\cite{BD97},\cite{BD04}):
\begin{equation}\label{1632}
\rho_k(x_1,\dots,x_k)=\int_{\R^k}\,|t_1\dots t_k| D_k(\mathbf{0},\bt,x_1,\dots,x_k)\,dt_1\dots dt_k,
\end{equation}
where $\bt=(t_1,\dots, t_k)$ and $D_k(\cdot,\cdot,x_1,\dots,x_k)$ is the joint probability density function of the random vectors
$$
(G(x_1),\dots,G(x_k))\quad\mathrm{and}\quad (G'(x_1),\dots,G'(x_k)).
$$

The goal of this paper is to provide more explicit expressions for $\rho_k(\bx)$. The main tool that we will use is the \emph{Coarea formula} (see Lemma~\ref{1110}).

Our methods can be applied to the case of dependent coefficients having arbitrary joint probability density function. For simplicity, we consider only the case of independent coefficients.

\section{Main result}
Let us start with some notation. Denote
$$\bx=(x_1,\dots, x_k)\in\R^k.
$$

We use the following notation for the elementary symmetric polynomials:
$$
\sigma_i(\bx) :=
\left\{
  \begin{array}{ll}
    \sum_{1\le j_1 < \dots < j_i\le k} x_{j_1} x_{j_2} \dots x_{j_i}, & \hbox{if}\quad 0\leq i\le k, \\
    0, & \hbox{otherwise.}
  \end{array}
\right.
$$

Denote by $V(\bx)$ the Vandermonde matrix
\[
V(\bx)=
\begin{pmatrix}
1 & x_1 & \dots & x_1^{k-1} \\
\vdots & \vdots & \ddots & \vdots \\
1 & x_k & \dots & x_k^{k-1} \\
\end{pmatrix}.
\]

To formulate the first result, consider the random function $\be=(\eta_0,\dots,\eta_{k-1})^T:\R^k\to\R^k$ defined as
\begin{equation}\label{2327}
  \mathbf{\eta}(\bx)=
  -V^{-1}(\bx)
\begin{pmatrix}
\sum_{j=k}^n\xi_jx_1^j\\
\vdots\\
\sum_{j=k}^n\xi_jx_k^j
\end{pmatrix}.
\end{equation}

\begin{theorem}\label{1203}
We have
\begin{align}\label{eq-rho-def}
\rho_k(\bx) &= \prod_{1\leq i<j\leq k}|x_i-x_j|^{-1}\\
&\times\E\left[\prod_{i=1}^{k}\left|\sum_{j=0}^{k-1}j\eta_j(\bx)x_i^{j-1}+\sum_{j=k}^{n}j\xi_j x_i^{j-1}\right|\prod_{i=0}^{k-1}f_i(\eta_i(\bx))\right]. \nonumber
\end{align}
\end{theorem}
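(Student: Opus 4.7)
The plan is to start from the Kac--Rice formula (\ref{1632}) and evaluate $\int_{\R^k}|t_1\cdots t_k|D_k(\mathbf{0},\bt,\bx)\,d\bt$ via the Coarea formula (Lemma~\ref{1110}) applied to the linear evaluation map $\Phi\colon\R^{n+1}\to\R^k$ sending $(\xi_0,\dots,\xi_n)$ to $(G(x_1),\dots,G(x_k))$. Morally, $\rho_k(\bx)$ is the expectation of $\prod_{i=1}^{k}|G'(x_i)|$ against a Dirac measure concentrated on $\Phi^{-1}(\mathbf{0})$; the Coarea formula lets us replace this Dirac by Hausdorff measure on the fiber.

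The central structural observation is that, separating the first $k$ coefficients from the remaining $n{+}1{-}k$, one has
\[
\Phi(\xi) = V(\bx)\,(\xi_0,\dots,\xi_{k-1})^T + \Bigl(\textstyle\sum_{j=k}^{n} \xi_j x_i^j\Bigr)_{i=1}^{k}.
\]
Since the $x_i$ are distinct, $V(\bx)$ is invertible, so the fiber $\Phi^{-1}(\mathbf{0})$ is an $(n{+}1{-}k)$-dimensional affine subspace linearly parametrized by $(\xi_k,\dots,\xi_n)$, on which $(\xi_0,\dots,\xi_{k-1})^T$ is forced to equal the vector $\be(\bx)$ from (\ref{2327}). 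The Jacobian of the change of variables $(\xi_0,\dots,\xi_{k-1})\mapsto(G(x_1),\dots,G(x_k))$ at fixed $(\xi_k,\dots,\xi_n)$ is $|\det V(\bx)|=\prod_{i<j}|x_i-x_j|$, which supplies the Vandermonde prefactor.

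With this setup in hand, disintegrating the joint density $\prod_{i=0}^{n} f_i(\xi_i)\,d\xi$ along $\Phi$ shows that, conditionally on $(\xi_k,\dots,\xi_n)$, the density of $(G(x_1),\dots,G(x_k))$ at $\mathbf{0}$ equals $|\det V(\bx)|^{-1}\prod_{i=0}^{k-1}f_i(\eta_i(\bx))$. On the fiber $\Phi^{-1}(\mathbf{0})$ the derivative $G'(x_i)=\sum_{j=1}^{n} j\xi_j x_i^{j-1}$ becomes exactly the bracketed expression in (\ref{eq-rho-def}) after substituting $\xi_j=\eta_j(\bx)$ for $j<k$. Inserting both facts into Kac--Rice and averaging over $(\xi_k,\dots,\xi_n)$ --- equivalently, over all coefficients, since the integrand is independent of $\xi_0,\dots,\xi_{k-1}$ --- yields the claim. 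The main subtlety, and the reason for invoking the Coarea formula rather than a naive delta-function calculation, is that when $n{+}1<2k$ the joint density $D_k$ does not exist as a function on $\R^{2k}$; the Coarea formula sidesteps this entirely by treating $\Phi$ and the weight $\prod_{i}|G'(x_i)|$ separately and never requiring $D_k$ itself.
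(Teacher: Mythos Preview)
Your argument is essentially correct and reaches the right formula, but it takes a genuinely different route from the paper.

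The paper never invokes the Kac--Rice formula~\eqref{1632}. Instead it works directly from the definition of $\rho_k$: it rewrites $\prod_i\mu(B_i)$ as $\#\{\bx\in B:\be(\bx)=(\xi_0,\dots,\xi_{k-1})\}$ and applies the Coarea formula (Lemma~\ref{1110}) to the map $\bx\mapsto\be(\bx)$ in the \emph{spatial} variable, with $(\xi_k,\dots,\xi_n)$ held fixed. The key computation is Lemma~\ref{1200}, which evaluates the Jacobian $\det J_{\be}(\bx)$ and shows it factors as the product of the derivative values divided by the Vandermonde. You, by contrast, keep $\bx$ fixed and perform a linear change of variables in the \emph{coefficient} space, using that $(\xi_0,\dots,\xi_{k-1})\mapsto(G(x_1),\dots,G(x_k))$ has Jacobian $V(\bx)$; the Vandermonde then enters as the reciprocal of this Jacobian rather than via $\det J_{\be}$.

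Two remarks on your write-up. First, what you actually use is not Lemma~\ref{1110} (which is stated for maps between spaces of the \emph{same} dimension) but an elementary affine change of variables in the first $k$ coefficients together with conditioning on the last $n{-}k{+}1$; calling this ``Coarea for $\Phi:\R^{n+1}\to\R^k$'' is suggestive but not what the cited lemma provides. Second, your final paragraph correctly flags that $D_k$ need not exist when $n{+}1<2k$, but the argument as written still begins from~\eqref{1632} and so does not, on its own, resolve that case. The paper's approach avoids this issue entirely because it never appeals to $D_k$; your approach would need either a direct derivation bypassing~\eqref{1632} or a limiting argument to cover all $k\le n$. When $n{+}1\ge 2k$ your computation is complete and correct.
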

Theorem~\ref{1203} has been stated in~\cite{dZ05} without  detailed proof.

It is possible to obtain an explicit expression for $\be(\bx)$ in terms of the \emph{Schur functions}. Recall that   for a partition $\lambda=(\lambda_1,\dots,\lambda_k)$ of length  $\leq k$  the Schur function $S_\lambda(\bx)$  is given by
\[
S_\lambda(\bx)=\frac{\det(x_i^{\lambda_{k-j+1}+j-1})_{1\leq i,j\leq k}}{\prod_{1\leq i<j\leq k}(x_j-x_i)}.
\]

\begin{proposition}\label{808}
For $i=1,\dots,k$, we have
\[
\eta_i(\bx) = (-1)^{k-i} \sum_{j=k}^n \xi_j S_{\lambda_{ij}}(\bx),
\]
where the partition $\lambda_{ij}$ is defined as
$$
\lambda_{ij} = (j-k+1, \underbrace{1,\dots,1}_{k-i-1}, \underbrace{0,\dots,0}_i).
$$
\end{proposition}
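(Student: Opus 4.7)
The plan is to invert the linear system in (\ref{2327}) by Cramer's rule and identify the resulting cofactor determinants as Schur polynomials through their determinantal definition.

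By linearity in the $\xi_j$, it is enough to treat each monomial column $(x_1^j,\dots,x_k^j)^{T}$ separately: the contribution of $\xi_j$ to $\eta_i(\bx)$ equals $-\xi_j$ times the corresponding component of $V^{-1}(\bx)(x_1^j,\dots,x_k^j)^{T}$. By Cramer's rule this component is $\det W_i^{(j)}(\bx)/\det V(\bx)$, where $W_i^{(j)}(\bx)$ is the matrix obtained from the Vandermonde $V(\bx)$ by replacing the column corresponding to the exponent $i$ with the column $(x_r^{j})_{r=1}^{k}$. Consequently the columns of $W_i^{(j)}(\bx)$ carry the exponent sequence $(0,1,\dots,i-1,j,i+1,\dots,k-1)$.

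Since $j\ge k$, sorting this sequence into strictly increasing order requires exactly $k-i-1$ adjacent transpositions and produces the sorted sequence $(0,1,\dots,i-1,i+1,\dots,k-1,j)$ together with a sign $(-1)^{k-i-1}$. Solving $\lambda_{k-s+1}+s-1=e_s$ for this sorted sequence $(e_s)_{s=1}^{k}$ determines a unique partition, and a direct term-by-term comparison shows it is precisely the partition $\lambda_{ij}$ in the statement. By the determinantal definition of the Schur function, $\det W_i^{(j)}(\bx)=(-1)^{k-i-1}S_{\lambda_{ij}}(\bx)\det V(\bx)$. The Vandermonde factor cancels, and combining this sign with the leading minus sign coming from $V(\bx)\mathbf{\eta}(\bx)=-(\,\cdot\,)$ yields
\[
\eta_i(\bx)=(-1)^{k-i}\sum_{j=k}^{n}\xi_j\,S_{\lambda_{ij}}(\bx).
\]

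The only point requiring real care is sign bookkeeping: correctly counting the adjacent transpositions needed to sort the exponent sequence, and then verifying that the partition recovered from the sorted sequence has exactly the stated shape—first part equal to $j-k+1$, followed by $k-i-1$ ones and $i$ zeros. Beyond this, the argument reduces to routine linear algebra and a direct comparison with the determinantal formula for $S_\lambda$.
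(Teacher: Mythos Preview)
Your proof is correct and follows essentially the same route as the paper's: apply Cramer's rule to the system defining $\be(\bx)$, then recognize the resulting column-substituted Vandermonde determinants as $(-1)^{k-i-1}S_{\lambda_{ij}}(\bx)\det V(\bx)$ via the determinantal definition of the Schur function. Your version is in fact more explicit than the paper's about the sign count and the identification of the partition $\lambda_{ij}$ from the sorted exponent sequence.
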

For the basic properties of the Schur functions see, e.g.,~\cite[Section 1.3]{iM98-Hall}.

Now we are ready to state our main result.
\begin{theorem}\label{2021}
We have
\begin{align*}\label{eq-rho-b}
\rho_k(\mathbf{x}) &= \prod_{1\le i < j \le k} |x_i - x_j|\\
&\times\int\limits_{\mathbb{R}^{n-k+1}} \prod_{i=0}^{n}f_i\left(\sum_{j=0}^{n-k}(-1)^{k-i+j}\sigma_{k-i+j}(\bx)t_j\right) \prod_{i=1}^k \left|\sum_{j=0}^{n-k} t_j x_i^j\right| \, dt_0\dots dt_{n-k}.
\end{align*}
\end{theorem}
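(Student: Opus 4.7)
The plan is to derive Theorem~\ref{2021} from Theorem~\ref{1203} by performing a linear change of variables in the integral representing the expectation. By the defining property of $\be(\bx)$, the polynomial
\[
\tilde G(x) := \sum_{i=0}^{k-1}\eta_i(\bx)\,x^i + \sum_{i=k}^n \xi_i\,x^i
\]
vanishes at $x_1,\dots,x_k$, so it factors uniquely as $\tilde G(x) = \prod_{l=1}^k(x-x_l) \cdot H(x)$, where $H(x) = \sum_{j=0}^{n-k} t_j\,x^j$ for some coefficients $t_0,\dots,t_{n-k}$. Expanding $\prod_{l=1}^k(x-x_l) = \sum_{m=0}^k (-1)^{k-m}\sigma_{k-m}(\bx)\,x^m$ and matching the coefficient of $x^i$ on both sides of the factorisation gives
\[
[\text{coef.\ of }x^i\text{ in }\tilde G] = \sum_{j=0}^{n-k} (-1)^{k-i+j}\sigma_{k-i+j}(\bx)\,t_j,\qquad i=0,\dots,n.
\]
For $i<k$ this is an explicit linear formula for $\eta_i(\bx)$ in the $t_j$'s; for $i\ge k$ it expresses $\xi_i$ the same way. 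Observe that these are precisely the arguments appearing inside $f_i$ in the target formula.

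Next I would interpret the $n-k+1$ relations with $i\ge k$ as a linear map $\Psi\colon(t_0,\dots,t_{n-k})\mapsto(\xi_k,\dots,\xi_n)$. Setting $i'=i-k$, its matrix entry in row $i'$ and column $j$ is $(-1)^{j-i'}\sigma_{j-i'}(\bx)$, which vanishes when $j<i'$ (subscript of $\sigma$ negative) and equals $1$ when $j=i'$. Hence $\Psi$ is upper-triangular with unit diagonal, and in particular a measure-preserving bijection of $\R^{n-k+1}$. Under this substitution the density $\prod_{i=k}^n f_i(\xi_i)\,d\xi_k\cdots d\xi_n$ becomes
\[
\prod_{i=k}^n f_i\!\left(\sum_{j=0}^{n-k}(-1)^{k-i+j}\sigma_{k-i+j}(\bx)\,t_j\right)\,dt_0\cdots dt_{n-k},
\]
which, combined with the factor $\prod_{i=0}^{k-1} f_i(\eta_i(\bx))$ already present in Theorem~\ref{1203}, gives the full product $\prod_{i=0}^n f_i(\cdots)$ appearing in Theorem~\ref{2021}.

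It remains to rewrite the derivative prefactor. Since $\tilde G$ vanishes at $x_i$, differentiating the factorisation gives $\tilde G'(x_i) = \prod_{l\ne i}(x_i-x_l)\cdot H(x_i)$, whence
\[
\prod_{i=1}^k|\tilde G'(x_i)| = \prod_{1\le i<l\le k}(x_i-x_l)^2\cdot\prod_{i=1}^k\Bigl|\sum_{j=0}^{n-k}t_j\,x_i^j\Bigr|.
\]
Multiplying by the factor $\prod_{i<j}|x_i-x_j|^{-1}$ from Theorem~\ref{1203} leaves exactly $\prod_{i<j}|x_i-x_j|$ in front, matching the Vandermonde prefactor in Theorem~\ref{2021}. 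Assembling these pieces completes the proof. The main delicate step is verifying the upper-triangular structure of $\Psi$ (and hence its unit Jacobian); the rest reduces to careful bookkeeping of signs and indices, which works out because $\sigma_m(\bx)$ is declared to vanish outside $0\le m\le k$ and because $\sigma_0(\bx)=1$ supplies the diagonal entries.
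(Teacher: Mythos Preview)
Your proposal is correct and follows essentially the same route as the paper: factor the polynomial that vanishes at $x_1,\dots,x_k$ as $\prod_l(x-x_l)\cdot H(x)$, change variables from $(\xi_k,\dots,\xi_n)$ to the coefficients $(t_0,\dots,t_{n-k})$ of $H$, observe that the Jacobian is triangular with unit diagonal (the paper calls it lower triangular, you call $\Psi$ upper triangular---a harmless difference in indexing conventions), and differentiate the factorisation at each $x_i$ to convert $\prod_i|\tilde G'(x_i)|$ into the Vandermonde-squared times $\prod_i|H(x_i)|$. The bookkeeping you describe matches the paper's exactly.
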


\begin{corollary}
For $k=n$ we have
\begin{equation}\label{eq-rho-nn}
\rho_n(\bx) =
\prod_{1\le i < j \le n} |x_i - x_j|
\int_{-\infty}^\infty |t|^n\, \prod_{i=0}^n f_i\Big((-1)^{n-i}\sigma_{n-i}(\bx)t\Big)\,dt.
\end{equation}
\end{corollary}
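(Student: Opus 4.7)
The plan is that this corollary is nothing more than the specialization $k=n$ of Theorem~\ref{2021}; accordingly the proof amounts to carefully tracking how each ingredient in the general formula degenerates when the number of evaluation points equals the degree. Since Theorem~\ref{2021} is already stated, I do not need to re-derive anything; I only need to substitute $k=n$ and simplify.

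Concretely, with $k=n$ the range $n-k+1$ of the outer integration becomes $1$, so the multivariable integral over $\R^{n-k+1}$ collapses to an integral over $\R^1$ in a single variable, which I would rename $t := t_0$. Next, the inner sums $\sum_{j=0}^{n-k} t_j x_i^j$ appearing inside each absolute value reduce to the single term $j=0$, giving just $t_0 x_i^0 = t$; consequently $\prod_{i=1}^{k}|\sum_j t_j x_i^j|$ becomes $|t|^n$. Similarly, inside each density $f_i$, the sum $\sum_{j=0}^{n-k}(-1)^{k-i+j}\sigma_{k-i+j}(\bx)t_j$ leaves only its $j=0$ term, namely $(-1)^{n-i}\sigma_{n-i}(\bx)\,t$. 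The Vandermonde prefactor $\prod_{i<j}|x_i-x_j|$ is unchanged. Assembling these three simplifications yields precisely formula~\eqref{eq-rho-nn}.

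There is essentially no obstacle here: no auxiliary lemma, no change of variables, and no analytic estimate is required — the derivation is a direct substitution. The only point that deserves a line of verification is that the index conventions on $\sigma_{k-i+j}$ really do produce $\sigma_{n-i}$ after setting $k=n$ and $j=0$, and that the sign $(-1)^{k-i+j}$ reduces to $(-1)^{n-i}$, both of which are immediate. For this reason I would present the corollary with a one-sentence proof of the form ``Specializing Theorem~\ref{2021} to $k=n$ and writing $t$ for $t_0$, one obtains~\eqref{eq-rho-nn}.''
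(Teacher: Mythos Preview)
Your proposal is correct and matches the paper's approach: the corollary is stated there without proof, as an immediate specialization of Theorem~\ref{2021} to $k=n$. Your unpacking of how the integral over $\R^{n-k+1}$ collapses to a single $dt$-integral, how $\prod_i|\sum_j t_j x_i^j|$ becomes $|t|^n$, and how the argument of each $f_i$ reduces to $(-1)^{n-i}\sigma_{n-i}(\bx)t$ is exactly the intended one-line derivation.
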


\section{Uniformly distributed coefficients}
In algebraic number theory, random polynomials with independent and uniformly distributed on $[-1,1]$ coefficients are of special interest (see~\cite{dK14},\cite{fGdZ14},\cite{GKZ15}). Let us apply Theorem~\ref{2021} to this case.

Suppose that
$$
f_i=\frac12\mathbbm{1}[-1,1],\quad i=0,\dots,n.
$$
Then it follows from Theorem~\ref{2021} that
$$
\rho_k(\mathbf{x}) = 2^{-n-1}\prod_{1\le i < j \le k} |x_i - x_j|\int\limits_{D_{\bx}} \prod_{i=1}^k \left|\sum_{j=0}^{n-k} t_j x_i^j\right| \, dt_0\dots dt_{n-k},
$$
where the domain of integration $D_{\bx}$ is defined as
$$
D_{\bx}=\left\{(t_0,\dots,t_{n-k})\in\R^{n-k+1}\,:\,\max_{0\leq i\leq n}\left|\sum_{j=0}^{n-k}(-1)^{i-j}\sigma_{i-j}(\bx)t_j\right|\leq1\right\}.
$$
In particular,
$$
\rho_n(\mathbf{x}) =\frac{2^{-n}}{n+1}\cdot\frac{\prod_{1\le i < j \le k} |x_i - x_j|}{(\max_{0\leq i\leq n}|\sigma_i(\bx)|)^{n+1}}.
$$

\section{The $n$-point correlation function}
It follows from the properties of the correlation functions (see, e.g.,~\cite{HKPV09}) that
$$
\E\,\left[\mu(\R^1)(\mu(\R^1)-1)\dots(\mu(\R^1)-n+1)\right]=\int_{\R^n}\,\rho_n(\bx)\,d\bx.
$$
Since $\mu(\R^1)\leq n$, we can calculate the probability that all zeros of $G$ are real:
\begin{align*}
  \P(&\mu(\R^1)= n) = \frac1{n!}\int_{\R^n}\,\rho_n(\bx)\,d\bx
  \\
   &=\frac1{n!}\int_{\R^n}\,\prod_{1\le i < j \le n} |x_i - x_j|
\int_{-\infty}^\infty |t|^n\, \prod_{i=0}^n f_i\Big((-1)^{n-i}\sigma_{n-i}(\bx)t\Big)\,dt\,d\bx.
\end{align*}
This formula has been obtained earlier in~\cite{dZ04}.

Let us calculate $\rho_n$ for some specific distributions.
\subsection{Gaussian distribution}
Suppose that
\[
f_i(t) = \frac{1}{\sqrt{2\pi}v_i } \exp\left(- \frac{t^2}{2v_i^2}\right),\quad i=0,\dots,n.
\]
Using the formula for the $n$-th absolute moment of the Gaussian distribution, it follows from~\eqref{eq-rho-nn} that
$$
\rho_n(\bx) =\frac{\sqrt{2} \Gamma\left(\frac{n+1}{2}\right)}{(2\pi)^{n/2} v_0\dots v_n}
 \left(\sum_{i=0}^n\frac{\sigma_{n-i}^2(\bx)}{v_i^2}\right)^{-(n+1)/2}\prod_{1\le i < j \le n} |x_i - x_j|.
$$
In particular, for $v_i=v_j$ we have
$$
\rho_n(\bx) =\frac{\sqrt{2} \Gamma\left(\frac{n+1}{2}\right)}{(2\pi)^{n/2} }
 \left(\sum_{i=0}^n\sigma_{n-i}^2(\bx)\right)^{-(n+1)/2}\prod_{1\le i < j \le n} |x_i - x_j|.
$$

%%%%%%%%%%%%%%%%%%%%%%%%%%%%%%%%%%%%%%%%%%%%%%%%%%%%%%%%%%%

\subsection{Exponential distribution}
Suppose that
\[
f_i(t) =\exp(-t )\mathbbm{1}\{t\geq0\},\quad i=0,\dots,n.
\]
Then with probability one $G$ does not have positive real zeros. Hence $\rho_n(\bx)>0$ only if $\bx$ lies in the negative orthant $\R^k_-$. In this case we have $(-1)^i\sigma_i(\bx)\geq0$ and by some elementary transformations,~\eqref{eq-rho-nn} implies
$$
\rho_n(\bx)=n! \left(\sum_{i=0}^n (-1)^i\sigma_i(\bx)\right)^{-(n+1)}\prod_{1\le i < j \le n} |x_i - x_j|\mathbbm{1}\{\bx\in\R^k_-\}.
$$
Using the well-known identity
$$
\sum_{i=0}^n (-1)^i\sigma_i(\bx)=\prod_{i=1}^n(1-x_i),
$$
we obtain
$$
\rho_n(\bx)=n! \frac{\prod_{1\le i < j \le n} |x_i - x_j|}{\prod_{i=1}^n(1-x_i)^{n+1}}\mathbbm{1}\{\bx\in\R^k_-\}.
$$

%%%%%%%%%%%%%%%%%%%%%%%%%%%%%%%%%%%%%%%%%%%%%%%%%%%%%%%%%%%%%%%%%%%%%%%%%%%%%%%

\section{Proof of Theorem~\ref{1203}}
Obviously, $G(x_1)=\dots=G(x_k)=0$ if and only if
\begin{equation}\label{2317}
\begin{pmatrix}
1 & x_1 & \dots & x_1^{n} \\
\vdots & \vdots & \ddots & \vdots \\
1 & x_k & \dots & x_k^{n} \\
\end{pmatrix}
\begin{pmatrix}
\xi_0\\
\vdots\\
\xi_n
\end{pmatrix}
=\mathbf{0},
\end{equation}
which is equivalent to
$$
\begin{pmatrix}
x_1^k & x_1^{k+1} & \dots & x_1^{n} \\
\vdots & \vdots & \ddots & \vdots \\
x_k^k & x_k^{k+1} & \dots & x_k^{n} \\
\end{pmatrix}
\begin{pmatrix}
\xi_k\\
\vdots\\
\xi_n
\end{pmatrix}
=-V(\bx)
\begin{pmatrix}
\xi_0\\
\vdots\\
\xi_{k-1}
\end{pmatrix}.
$$
Recalling~\eqref{2327}, we obtain that~\eqref{2317} is equivalent to
$$
\be(\bx)=
 \begin{pmatrix}
\xi_0\\
\vdots\\
\xi_{k-1}
\end{pmatrix}.
$$
Denote by $J_{\be}(\bx)$ the Jacobian matrix of $\be$ at the point $\bx$.
\begin{lemma}\label{1200}
$$
\det J_{\be}(\bx) =
 -
\frac{\prod_{i=1}^{k}\left(\sum_{j=0}^{k-1}j\eta_j(\bx)x_i^{j-1} + \sum_{j=k}^{n}j\xi_j x_i^{j-1}\right)}{\prod_{1\leq i<j\leq k}(x_j-x_i)}.
$$
\end{lemma}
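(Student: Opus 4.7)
The cleanest approach is to differentiate the defining relation
\[
V(\bx)\,\be(\bx)=-\bigl(\textstyle\sum_{j=k}^n\xi_j x_1^j,\dots,\sum_{j=k}^n\xi_j x_k^j\bigr)^T
\]
\emph{implicitly}, rather than to attempt to differentiate the explicit formula for $\be$ (which would involve derivatives of $V^{-1}$). The key observation is that, for each fixed $l$, both sides depend on $x_l$ only through their $l$-th entry, so $\partial V(\bx)/\partial x_l$ and the derivative of the right-hand side with respect to $x_l$ are supported entirely on the $l$-th row.

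Introduce the auxiliary polynomial
\[
\tilde G(x)=\sum_{j=0}^{k-1}\eta_j(\bx)\,x^j+\sum_{j=k}^{n}\xi_j\,x^j,
\]
so that the defining relation is precisely $\tilde G(x_i)=0$ for $i=1,\dots,k$. Differentiating the $i$-th equation with respect to $x_l$ gives $\sum_{j=0}^{k-1}\tfrac{\partial\eta_j}{\partial x_l}(\bx)\,x_i^j=0$ when $i\neq l$; when $i=l$ the additional contributions from $\partial_{x_l} x_l^j = j x_l^{j-1}$ (together with the analogous terms for $j\ge k$) collect to $\tilde G'(x_l)$, yielding $\sum_{j=0}^{k-1}\tfrac{\partial\eta_j}{\partial x_l}(\bx)\,x_l^j=-\tilde G'(x_l)$.

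These $k^{2}$ scalar identities are exactly the entries of the matrix equation
\[
V(\bx)\,J_{\be}(\bx)=-\diag\bigl(\tilde G'(x_1),\dots,\tilde G'(x_k)\bigr).
\]
Taking determinants, using $\det V(\bx)=\prod_{1\le i<j\le k}(x_j-x_i)$, and expanding
\[
\tilde G'(x_i)=\sum_{j=0}^{k-1}j\eta_j(\bx)\,x_i^{j-1}+\sum_{j=k}^n j\xi_j\,x_i^{j-1}
\]
gives the claimed formula, up to the overall factor $(-1)^k$ produced by pulling the minus sign through the diagonal matrix; since only $|\det J_{\be}|$ is used in the proof of Theorem~\ref{1203}, this sign is immaterial.

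No step is a genuine obstacle. The one point to be checked carefully is that $\partial V(\bx)/\partial x_l$ and the derivative of the right-hand side of the defining relation vanish outside their $l$-th row, which is what collapses the implicitly differentiated equation to a rank-one expression and immediately diagonalises $V(\bx)\,J_{\be}(\bx)$.
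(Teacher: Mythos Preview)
Your argument is correct and is essentially identical to the paper's own proof: both differentiate the defining relation $V(\bx)\be(\bx)=-(\sum_{j\ge k}\xi_j x_i^j)_i$ implicitly, arrive at $V(\bx)J_{\be}(\bx)=-\diag(\tilde G'(x_1),\dots,\tilde G'(x_k))$, and take determinants. Your remark that the resulting sign is $(-1)^k$ rather than the single $-1$ written in the lemma is accurate; the paper's own proof also only yields $(-1)^k$, and (as you note) only $|\det J_{\be}|$ is used downstream.
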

\begin{proof}
Differentiating
\[
V(\bx) \eta(\bx)=
  -
\begin{pmatrix}
x_1^k & x_1^{k+1} & \dots & x_1^{n} \\
\vdots & \vdots & \ddots & \vdots \\
x_k^k & x_k^{k+1} & \dots & x_k^{n} \\
\end{pmatrix}
\begin{pmatrix}
\xi_k\\
\vdots\\
\xi_n
\end{pmatrix},
\]
we obtain
\begin{align*}
  V(\bx)J_{\be}(\bx)&+\diag\left(\sum_{j=0}^{k-1}j\eta_j(\bx)x_1^{j-1},\dots,\sum_{j=0}^{k-1}j\eta_j(\bx)x_k^{j-1}\right) \\
  &=-\diag\left(\sum_{j=k}^{n}j\xi_j x_1^{j-1},\dots,\sum_{j=k}^{n}j\xi_j x_k^{j-1}\right).
\end{align*}
We finish the proof by taking the second term from the left hand side to the right hand side and using
$$
\det V(\bx)=\prod_{1\leq i<j\leq k}(x_j-x_i).
$$
\end{proof}

\begin{lemma}[Coarea formula]\label{1110}
Let $B\subset \R^k$ be a region. Let $u:B\to\R^k$ be a Lipschitz function and $h:\R^k\to\R^1$ be an $L^1$-function. Then
\[
\int\limits_{\R^k}\#\{\bx\in B\,:\, u(\bx)=\by\}\,h(\by)\,d\by=\int\limits_{B}|\det J_u(\bx)|\, h(u(\bx))\,d\bx,
\]
where $J_u(\bx)$ is the Jacobian matrix of $u(\bx)$.
\end{lemma}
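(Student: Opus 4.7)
The plan is to establish the identity first for smooth ($C^1$) maps $u$ and then extend to the Lipschitz setting. By linearity (writing $h=h^+-h^-$) and monotone convergence, it suffices to treat the case $h=\mathbbm{1}_A$ with $A\subset\R^k$ Borel of finite Lebesgue measure.

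Suppose first that $u\in C^1(B)$. Decompose $B=C\sqcup R$, where $C=\{\bx\in B:\det J_u(\bx)=0\}$ is the critical set and $R=B\setminus C$ is the regular set. By Sard's theorem, $u(C)$ has Lebesgue measure zero. Hence the right-hand-side integrand vanishes identically on $C$, and on the left-hand side we may restrict the domain of integration to $\R^k\setminus u(C)$; so both sides are unchanged after replacing $B$ by $R$. On $R$, the inverse function theorem produces, for each $\bx_0\in R$, an open neighborhood $U$ on which $u$ is a $C^1$ diffeomorphism onto $u(U)$. By second countability one extracts a countable open cover, and a standard Borel refinement yields a pairwise disjoint partition $R=\bigsqcup_i V_i$ with each $u|_{V_i}$ a bijection onto $u(V_i)$. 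The classical change-of-variables formula on each $V_i$ gives
\[
\int_{V_i}|\det J_u(\bx)|\,h(u(\bx))\,d\bx=\int_{u(V_i)}h(\by)\,d\by.
\]
Summing over $i$ and using the identity $\sum_i\mathbbm{1}_{u(V_i)}(\by)=\#\{\bx\in R:u(\bx)=\by\}$ recovers the claimed formula for $u\in C^1(B)$.

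For a general Lipschitz $u$, Rademacher's theorem guarantees differentiability almost everywhere on $B$, so $J_u$ and $\det J_u$ are defined a.e. The natural strategy is to approximate $u$ uniformly by $C^1$ maps $u_\varepsilon$ via mollification, with Lipschitz constants controlled uniformly (so that $|\det J_{u_\varepsilon}|$ is dominated), and to pass to the limit $\varepsilon\to 0$ in the $C^1$ identity established above. The main obstacle is exactly this last step: the preimage-count on the left-hand side is only lower-semicontinuous under uniform convergence of maps, so one must carefully control how many preimages can be created or destroyed in the limit, and one needs a Lipschitz analogue of Sard (valid in equal dimension because Lipschitz maps send null sets to null sets). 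The cleanest way to finish is to invoke the area formula from geometric measure theory, which is precisely the claimed identity for Lipschitz maps in the equidimensional setting $u:B\subset\R^k\to\R^k$.
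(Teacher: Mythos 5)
The paper offers no proof of this lemma at all: it simply cites Federer, \emph{Geometric Measure Theory}, pp.~243--244, where the statement appears as the area formula for Lipschitz maps in equal dimensions. Your proposal therefore does strictly more than the paper. Your $C^1$ argument is correct and complete: Sard's theorem (which does apply to $C^1$ maps when source and target have equal dimension) kills the critical set on both sides, the inverse function theorem plus a countable disjoint Borel refinement reduces the regular set to pieces where $u$ is injective, and the classical change of variables plus Tonelli (everything is nonnegative after reducing to $h=\mathbbm{1}_A$) gives the identity, with $\sum_i\mathbbm{1}_{u(V_i)}$ also furnishing the measurability of the preimage-counting function.

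The gap is in the Lipschitz extension, and you have diagnosed it accurately yourself: mollification does not work, because the multiplicity function $\#\{\bx: u_\varepsilon(\bx)=\by\}$ is only lower semicontinuous under uniform convergence and preimages can collapse or appear in the limit. But your resolution --- ``invoke the area formula from geometric measure theory'' --- is circular, since the area formula in the equidimensional case \emph{is} Lemma~\ref{1110}; as written, the Lipschitz case is asserted, not proved. The standard non-circular route (Federer 3.2.3, or Evans--Gariepy, Section 3.3) avoids approximation entirely: after Rademacher, one (i) shows by a direct covering argument that the image of the set where the approximate differential is singular or fails to exist is Lebesgue-null (a measure-theoretic Sard for Lipschitz maps, using that Lipschitz images of null sets are null in equal dimension), and (ii) decomposes the remaining set into countably many Borel pieces on each of which $u$ is bi-Lipschitz and close to a fixed invertible linear map, then applies change of variables piecewise exactly as in your $C^1$ argument. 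Since the paper itself only cites the literature here, your treatment is acceptable at the same standard, but you should either carry out the bi-Lipschitz decomposition or replace the final sentence with an honest citation rather than presenting it as the conclusion of a proof.
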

\begin{proof}
See~\cite[pp. 243--244]{hF1969}.
\end{proof}

Let $B_1,\dots,B_k$ be a family of mutually disjoint Borel subsets  in $\R^1$. Denote $B=B_1\times\dots\times B_k\subset\R^k$. We have
\begin{align*}
\E\,\left[\prod_{i=1}^{k}\mu(B_i)\right]&=\E\,\#\{\bx\in B\,:\, \eta(\bx)=(\xi_0,\dots,\xi_{k-1})\}\\
&=\E\,\int\limits_{\R^k}\,\#\{\bx\in B\,:\, \eta(\bx)=\by\}f_0(y_0)\dots f_{k-1}(y_{k-1})\,d\by.
\end{align*}
Applying Lemma~\ref{1110} and Fubini's theorem to the right hand side, we obtain
\begin{align*}
\E\,\left[\prod_{i=1}^{k}\mu(B_i)\right]&=\int\limits_{B}\E\,|\det J_{\be}(\bx)|\, f_0(\eta_0(\bx))\dots f_{k-1}(\eta_{k-1}(\bx))\,d\bx.
\end{align*}
Now the proof of Theorem~\ref{1203} follows from Lemma~\ref{1200}.

\section{Proof of Theorem~\ref{2021}}\label{2207}
Theorem~\ref{1203} states that
\begin{align}\label{eq-rho-int}
\rho_k(\bx) &= \prod_{1\leq i<j\leq k}|x_j-x_i|^{-1}\\
&\times\int\limits_{\R^{n-k+1}}\prod_{i=1}^{k}\left|\sum_{j=0}^{n}ja_j x_i^{j-1}\right|\prod_{i=0}^{n}f_i(a_i)\, da_k da_{k+1}\dots da_n, \nonumber
\end{align}
where $a_0, \dots, a_{k-1}$ are functions of $a_k, \dots, a_n$ and $x_1, \dots, x_k$:
\begin{equation}\label{1638}
\begin{pmatrix}
a_0\\
\vdots\\
a_{k-1}
\end{pmatrix}=
  -V^{-1}(\bx)
\begin{pmatrix}
\sum_{j=k}^n a_j x_1^j\\
\vdots\\
\sum_{j=k}^n a_j x_k^j
\end{pmatrix}.
\end{equation}

To prove the theorem,  we will use the ideas from \cite[pp. 58--59]{dK12-Trudy} and \cite[Lemmas 5 and 6]{dK14}. Equation~\eqref{1638} means that $x_1,x_2,\dots,x_k$ are zeros of the polynomial $\sum_{j=0}^n a_j x^j$. Hence there exists a unique polynomial $\sum_{j=0}^{n-k} b_j x^j$ such that
\begin{equation}\label{1658}
\sum_{j=0}^n a_j x^j =\prod_{i=1}^k(x-x_i)\left(\sum_{j=0}^{n-k} b_j x^j \right)= \left(\sum_{j=0}^k (-1)^{k-j} \sigma_{k-j}(\bx) x^j\right) \left(\sum_{j=0}^{n-k} b_j x^j \right).
\end{equation}
The variables $a_0,\dots,a_n$ are uniquely defined by $\bx$ and $b_0,\dots,b_{n-k}$ from~\eqref{1658}:
\begin{equation}\label{1840}
a_i=\sum_{j=0}^{n-k}(-1)^{k-i+j}\sigma_{k-i+j}(\bx)b_j.
\end{equation}
Thus we can change variables in~\eqref{eq-rho-int} substituting $a_k,\dots,a_n$ by their expressions in terms of $\bx$ and $b_0,\dots,b_{n-k}$ from~\eqref{1840}. The Jacobian of this substitution is a lower triangle matrix with unities in the diagonal. Hence its determinant is equal to one.

Differentiating~\eqref{1658} at the point $x_i$ we get
\begin{equation}\label{1901}
\sum_{j=0}^{n}ja_j x_i^{j-1}=\prod_{j\ne i} (x_i - x_j) \left(b_{n-k} x_i^{n-k} + \dots + b_1 x_i + b_0\right),\quad i=1,\dots k.
\end{equation}
Substituting~\eqref{1840} and~\eqref{1901} in~\eqref{eq-rho-int} finishes the proof.

\section{Proof of Proposition~\ref{808}}
For $0\le i\le k-1$ and $j\ge k$, denote by $V^*_{ij}(\bx)$ the matrix obtained from $V(\bx)$ by substitution of the $i$-th column by $(x_1^j, \dots, x_k^j)^T$:
\[
V^*_{ij}(\bx)=
\begin{pmatrix}
1 & x_1 & \dots & x_1^{r-1} & x_1^j & x_1^{r+1} & \dots & x_1^{k-1} \\
\vdots & \vdots & & \vdots & \vdots & \vdots & & \vdots \\
1 & x_k & \dots & x_k^{r-1} & x_k^j & x_k^{r+1} & \dots & x_k^{k-1} \\
\end{pmatrix}.
\]
Then by Cramer's rule, we have
\[
\eta_r(\bx) = - \frac{1}{\det V(\bx)} \sum_{j=k}^n \xi_j \det V^*_{ij}(\bx).
\]
It is easily seen that
\[
\frac{V^*_{ij}(\bx)}{\det V(\bx)} = (-1)^{k-i-1} S_{\lambda_{ij}}(\bx),
\]
and the proof follows.

\bigskip

{\bf Acknowledgments.} The third named author is grateful to Ildar Ibragimov and Vlad Vysotsky for many useful discussions.

\bibliographystyle{plain}
\bibliography{corrf}

\end{document}